\def\N{\mathbb N}
\def\Z{\mathbb Z}
\def \F{\mathbb F}
\def \U{\mathcal U}
\def\ord{\mathop{\rm ord}\nolimits}
\def\rad{\mathop{\rm rad}}
\def\lcm{\mathop{\rm lcm}}
\theoremstyle{plain}
\newtheorem{theorem}{Theorem}[section]
\newtheorem{lemma}[theorem]{Lemma}
\newtheorem{definition}[theorem]{Definition}
\newtheorem{corollary}[theorem]{Corollary}
\newtheorem{remark}[theorem]{Remark}
\newtheorem{example}[theorem]{Example}
\def\qed{\hfill\hbox{$\square$}}
\theoremstyle{definition}
\author[F.E. Brochero Mart\'{\i}nez]{F. E. Brochero Mart\'{\i}nez}
\author[Lucas Reis]{Lucas Reis}
\address{
Departamento de Matem\'{a}tica\\
Universidade Federal de Minas Gerais\\
UFMG\\
Belo Horizonte, MG\\
 30123-970\\
 Brazil\\
 }
 \email{fbrocher@mat.ufmg.br }\email{lucasreismat@gmail.com}
\title{ Factoring polynomials of the form $f(x^n)\in \F_q[x]$}
\keywords{Irreducible Polynomial in a Finite Field, Irreducible Factors, Cyclotomic Polynomial}
\date{\today
}
\subjclass[2000]{ }
\subjclass[2010]{12E20 (primary) and 11T30(secondary)}
\begin{document}
\maketitle
%\baselineskip=1.5\baselineskip
\begin{abstract}
Let $f(x)\in  \F_q[x]$
be  an irreducible polynomial  of degree $m$ and exponent $e$, and  $n$ be a positive integer  such that $\nu_p(q-1)\ge \nu_{p}(e)+\nu_p(n)$ for all $p$ prime divisor of $n$.   We show a fast algorithm to determine the irreducible factors of $f(x^n)$. We also show the irreducible factors in the case when $\rad(n)$ divides $q-1$ and $gcd(m, n)=1$.   Finally, using this algorithm we split $x^n-1$ into irreducible factors, in the case when $n=2^mp^t$ and $q$ is a generator of the group $\Z_{p^2}^*$. 

\end{abstract}
\section{Introduction}

An important problem in finite field theory is how determine when a polynomial is irreducible, and in the case when the polynomial is reducible, how to find  explicitly  the irreducible factors of the polynomial.
 These problems have  important practical and theoretical consequences in a wide variety of technological situations, including error-correcting codes, cryptography,  efficient and secure communications, deterministic simulations of random processes and digital tracking systems. 

%Let $ \F_q$ be the finite field with $q$ elements and  $f(x)\in \F_q[x]$ is an irreducible polynomial of degree $m$.
 The following classical result shows necessary and sufficient conditions in order to verify the irreducibility of  the polynomial of the form $f(x^n)$.

\begin{theorem}[\cite{LiNi} Theorem 3.35] \label{irreducible} Let $n$ be a positive integer and $f(x)\in   \F_q[x]$ be an irreducible polynomial of degree $m$ and exponent $e$.  Then the polynomial $f(x^n)$ is irreducible over $  \F_q$ if and only if the following conditions are satisfied:
\begin{enumerate}
\item $rad(n)$ divides $e$;
\item gcd$(n, (q^m-1)/e)=1$ and
\item  if $4|n$ then $4|q^m-1$.
\end{enumerate}
In addition, in the case when the polynomial $f(x^n)$ is irreducible, then it has degree $mn$ and exponent $en$.
\end{theorem}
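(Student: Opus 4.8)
The plan is to reduce the irreducibility of $f(x^n)$ over $\F_q$ to that of a single binomial over the splitting field $\F_{q^m}$ of $f$, and then to invoke the classical (Vahlen--Capelli type) criterion for irreducibility of binomials. Fix a root $\alpha$ of $f$; then $\F_q(\alpha)=\F_{q^m}$, the element $\alpha$ has multiplicative order $e$ in $\F_{q^m}^*$, and over $\F_{q^m}$ one has the factorization $f(x^n)=\prod_{i=0}^{m-1}\bigl(x^n-\alpha^{q^i}\bigr)$. Choosing $\beta$ with $\beta^n=\alpha$, the tower $\F_q\subseteq\F_{q^m}=\F_q(\alpha)\subseteq\F_q(\beta)$ gives $[\F_q(\beta):\F_q]=m\cdot[\F_{q^m}(\beta):\F_{q^m}]$, and $[\F_{q^m}(\beta):\F_{q^m}]\le n$ with equality exactly when $x^n-\alpha$ is irreducible over $\F_{q^m}$. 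Since $f(x^n)$ is monic of degree $mn$ and vanishes at $\beta$, it is irreducible over $\F_q$ if and only if $\beta$ has degree $mn$ over $\F_q$, i.e.\ if and only if $x^n-\alpha$ is irreducible over $\F_{q^m}$. This also disposes of the degree claim, since $\deg f(x^n)=mn$ in any case.

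Next I would analyze when $x^n-\alpha$ is irreducible over $\F_Q$ with $Q=q^m$. The governing criterion states that $x^n-a$, for $a$ of order $e$ in $\F_Q^*$, is irreducible if and only if $a$ is not a $p$-th power in $\F_Q^*$ for any prime $p\mid n$, together with the condition $a\notin -4\,(\F_Q^*)^4$ when $4\mid n$. Because $\F_Q^*$ is cyclic of order $Q-1$, the $p$-th powers form the subgroup of index $\gcd(p,Q-1)$; hence $a$ fails to be a $p$-th power precisely when $p\mid Q-1$ and $a^{(Q-1)/p}\neq 1$. A short computation with $p$-adic valuations (writing $Q-1=ef$ with $f=(Q-1)/e$) shows this is equivalent to ``$p\mid e$ and $p\nmid (Q-1)/e$''. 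Requiring it for every prime $p\mid n$ yields exactly $\rad(n)\mid e$ (condition (1)) together with $\gcd(n,(q^m-1)/e)=1$ (condition (2)). Finally, under these conditions, when $4\mid n$ the prime $2$ divides $e\mid Q-1$ and $a$ is a non-square, so the extra condition $a\notin -4\,(\F_Q^*)^4$ translates, by examining the index of fourth powers in $\F_Q^*$, into $Q\equiv 1\pmod 4$, i.e.\ condition (3).

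The main obstacle is the binomial criterion itself, whose subtlety is entirely concentrated in the $2$-part of $n$. I would establish it by the classical reduction to prime-power exponents: for an odd prime $\ell$ one shows inductively that $x^{\ell^k}-a$ is irreducible over $\F_Q$ precisely when $a$ is not an $\ell$-th power, while for $\ell=2$ the inductive step from $x^2-a$ to $x^4-a$ can fail exactly when $a\in -4\,(\F_Q^*)^4$ --- the phenomenon behind identities such as $x^4+4=(x^2-2x+2)(x^2+2x+2)$ --- which is precisely what forces condition (3). Assembling the prime-power factors then gives the full criterion.

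It remains to compute the exponent. Let $\beta$ be a root of the (now irreducible) $f(x^n)$, so $\beta^n=\alpha$ with $\ord(\alpha)=e$. Writing $d=\ord(\beta)$, one has $e=d/\gcd(d,n)$, whence $d=e\gcd(d,n)$ and $\gcd\bigl(e,n/\gcd(d,n)\bigr)=1$. By condition (1) every prime dividing $n$ divides $e$, so the divisor $n/\gcd(d,n)$ of $n$, being coprime to $e$, must equal $1$; thus $\gcd(d,n)=n$ and $d=en$. Hence the irreducible polynomial $f(x^n)$ has exponent $en$, as claimed.
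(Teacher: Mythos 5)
The paper does not prove this statement: it is quoted from Lidl--Niederreiter (Theorem 3.35) and used as a black box, so there is no in-paper proof to compare yours against. Judged on its own, your argument is sound in all the steps it actually carries out. The reduction of irreducibility of $f(x^n)$ over $\F_q$ to irreducibility of the binomial $x^n-\alpha$ over $\F_{q^m}$ via the degree count $[\F_q(\beta):\F_q]=m\,[\F_{q^m}(\beta):\F_{q^m}]$ is correct; the translation of ``$\alpha$ is not a $p$-th power in $\F_{q^m}^*$'' into ``$p\mid e$ and $p\nmid (q^m-1)/e$'' is a correct computation in the cyclic group $\F_{q^m}^*$; the analysis of $-4(\F_{q^m}^*)^4$ (contained in the squares when $q^m\equiv 1\pmod 4$, equal to the set of non-squares when $q^m\equiv 3\pmod 4$, while $\alpha$ is forced to be a non-square by conditions (1)--(2)) correctly yields condition (3); and the exponent computation, via $\gcd(d,n)=\gcd(e\gcd(d,n),n)=\gcd(d,n)\cdot\gcd\bigl(e,n/\gcd(d,n)\bigr)$ and condition (1), is complete. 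The one genuine gap is that the binomial (Vahlen--Capelli) criterion itself is only outlined, and you correctly identify it as the main obstacle: note that in Lidl--Niederreiter this criterion is Theorem 3.75 and is there \emph{deduced from} Theorem 3.35 (applied to $f(x)=x-a$, $m=1$), so citing it would be circular. Your plan is not circular --- you reduce the general case to the degree-one case over the extension field and propose to prove that case by the prime-power induction --- but a self-contained proof must actually carry out that induction, in particular the $\ell=2$ step where $x^4-a$ can factor even when $a$ is a non-square (the $x^4+4=(x^2-2x+2)(x^2+2x+2)$ phenomenon), which is exactly where condition (3) originates.
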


%Observe that this theorem determine in any case the irreducibility of $f(x^ t)$, but it depend of the effective computation  of $e=\min\{k\in \N^*|  f(x) \text{ divides } x^ k-1\}$. 
Observe, in particular, this theorem gives necessary and sufficient condition to determine that  polynomial $x^n-a\in \F_q[x]$ is irreducible (see Theorem 3.75 in \cite{LiNi}).  In addition, if $a=1$ and $n>1$, the polynomial $x^n-1$ is always reducible, and  the problem of finding the irreducible factors of  $x^n-1\in \F_q[x]$ are  strongly related with the problem of finding  irreducible factor of cyclotomic polynomial $\Phi_n(x)$. In fact  $x^n-1=\prod_{d|n} \Phi_d(x)$, where $\Phi_d(x)$ denotes  the $d$-th cyclotomic polynomial (see \cite{LiNi} Theorem 2.45).
In general,  a ``generic efficient algorithm''  to split   $\Phi_n(x)$ in $\F_q[x]$ for arbitrary $n$ and $q$ is an open problem and just some particular cases are known. We note that the problem of determining the irreducible factors of $x^ n-1$ and $\Phi_n(x)$ in $\F_q[x]$  has been consider for several author: Mey \cite{Mey} and Blake, Gao, Mullin \cite{BGM} consider the case when $n=2^m$; 
 Fitzgerald  and Yucas \cite{FiYu} found  explicit factors of ciclotomic polynomial $\Phi_{2^ m3}(x)$ and studied the polynomial $\Phi_{2^ n r}(x)$ in the case when $r$ is prime and $r|(q-1)$; 
the same problem was studied by  Wang and Wang for the  cyclotomic polynomial $\Phi_{2^m5}(x)$ and Tuxanidy and  Wang \cite{TuWa} find the irreducible factors when the factorization of $\Phi_r(x) $ is known;
 Chen, Li and Tuerhong \cite{CLT}  consider the polynomial $x^{2^mp^t}-1$ in the case that  $p|(q-1)$ and in \cite{BGO} the authors found the explicit factorization of $x^n-1$, in the case when $\rad(n)|(q-1)$.

In this article, we are going to consider $f(x)$ and $n$ as in Theorem   \ref{irreducible}, but   satisfied some special condition  in order that  condition (1) or (2) of Theorem is not true. Therefore $f(x^n)$ is reducible polynomial and we show a computational fast algorithm to find every irreducible factors. 

Finally, as application,  we use our method to split $x^{2^mp^t}-1$ into irreducible factors  in the case when $p$ is a prime and $q$ is a primitive element of the multiplicative group $\Z_{p^l}^*$, where $l=\min\{2, t\}$. In fact, this is a particular case of the results obtaion by 
 Tuxanidy and  Wang, but with the advantage that each factor is explicitly shown.

\section{Preliminaries}
Throughout this paper, $\F_q$ will denote the finite field with $q$ elements%,  $f(x)$ will be a monic irreducible polynomial of degree $m$ and exponent $e$
. For all prime $p$ and $k\ne 0$ integer, $\nu_p(k)$ denotes the maximum power of $p$ that divides $k$. In the case when $gcd(k,n)=1$, $\ord_k(n)$ denotes the order of $n$  modulo $k$ and for all $\beta\in \F_q^*$, $\ord(\beta)$ denotes the order of $\beta$ in the multiplicative group $\F_q^ *$. Finally,   $\U(k):=\{\beta\in\F_q; \ord(\beta)=k\}$, i.e., $\U(k)$ is the set of primitive $k-$th roots of unity in $\F_q$. 
Notice that $\U(k)\ne \emptyset$ if and only if $k|(q-1)$ and, in this case, $|\U(k)|=\varphi(k)$.

For each monic polynomial $f(x)\in \F_q[x]$ with $f(0)\ne 0$, the exponent $e$ of $f(x)$ is the minimal positive integer such that  $f(x)$ divides $x^e-1$. Since $f(x)$ divides $x^{q^m-1}-1$, where $m$ is the degree of $f$, it  follows that $e$ divides $q^m-1$.
 The following lemma is a classical result about the structure of the roots of an irreducible polynomial in $\F_q$.
\begin{lemma}\label{frobenius} Let $f(x)\in  \F_q[x]$ be a monic  irreducible polynomial of degree $m$ and exponent $e$,  and $\alpha$ be any root of $f(x)$. Then $f(x)=\prod\limits_{j=1}^{m}(x-\alpha^{q^j})$ and $m=\ord_e q$.
\end{lemma}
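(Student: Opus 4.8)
The plan is to use the standard Frobenius-orbit description of the roots of an irreducible polynomial, and then to translate the period of that orbit into the order of $q$ modulo $e$.

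First I would recall that since $f$ is irreducible of degree $m$ and $\alpha$ is a root, the residue field $\F_q(\alpha)$ is isomorphic to $\F_{q^m}$, so $\alpha\in\F_{q^m}$ and in particular $\alpha^{q^m}=\alpha$. The key observation is that the Frobenius automorphism $\sigma\colon\beta\mapsto\beta^q$ fixes $\F_q$ pointwise, so applying $\sigma$ to the relation $f(\alpha)=0$ shows that each $\alpha^{q^j}$ is again a root of $f$. I would then argue that the elements $\alpha,\alpha^{q},\dots,\alpha^{q^{m-1}}$ are pairwise distinct: if $\alpha^{q^i}=\alpha^{q^j}$ for some $0\le i<j\le m-1$, then $\alpha$ would be fixed by $\sigma^{j-i}$ with $j-i<m$, forcing $\alpha\in\F_{q^{j-i}}$ and hence $[\F_q(\alpha):\F_q]=m\le j-i<m$, a contradiction. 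Since $f$ has exactly $m$ roots and we have exhibited $m$ distinct ones, and since $\alpha^{q^m}=\alpha=\alpha^{q^0}$, re-indexing gives the product formula $f(x)=\prod_{j=1}^m(x-\alpha^{q^j})$.

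For the second assertion, I would identify the exponent $e$ with the multiplicative order of $\alpha$ in $\F_{q^m}^*$. Indeed, $f(x)\mid x^e-1$ is equivalent to $\alpha^e=1$ (the remaining roots $\alpha^{q^j}$ automatically satisfy $(\alpha^{q^j})^e=(\alpha^e)^{q^j}=1$ as well), so the minimal such $e$ is precisely $\ord(\alpha)$. With this in hand, for any positive integer $j$ one has $\alpha^{q^j}=\alpha$ if and only if $\alpha^{q^j-1}=1$, i.e.\ if and only if $e\mid q^j-1$, i.e.\ if and only if $q^j\equiv 1\pmod e$. The least positive $j$ satisfying the last congruence is by definition $\ord_e q$.

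Finally I would match the two descriptions of this least period. On one hand, the distinctness established above shows that the smallest positive integer $j$ with $\alpha^{q^j}=\alpha$ is exactly $m$; on the other hand, the congruence analysis shows it equals $\ord_e q$. Hence $m=\ord_e q$, completing the proof. I expect the only genuinely delicate point to be the distinctness of the conjugates $\alpha^{q^j}$, since everything else follows formally from it; this is where the irreducibility of $f$ (equivalently, that $\alpha$ generates $\F_{q^m}$ and lies in no proper subfield) is essential.
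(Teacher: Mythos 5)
Your proof is correct, and it rests on the same underlying idea as the paper's --- the Frobenius orbit $\alpha\mapsto\alpha^q$ acting on the roots --- but it runs the argument in the opposite direction. The paper sets $u=\ord_e q$, forms $g(x)=\prod_{j=1}^{u}(x-\alpha^{q^j})$, observes that $g$ is Frobenius-invariant (hence in $\F_q[x]$) and divides $f$, and then invokes irreducibility to force $g=f$ and so $m=u$. You instead establish the degree-$m$ factorization first, proving the pairwise distinctness of $\alpha,\alpha^q,\dots,\alpha^{q^{m-1}}$ by the subfield argument ($\alpha$ generates $\F_{q^m}$, so it is fixed by no proper power of Frobenius), and only afterwards identify the orbit period with $\ord_e q$ via $e=\ord(\alpha)$. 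Your route has the small advantage of making explicit two points the paper leaves implicit: that the $u$ conjugates in the paper's $g$ really are distinct (so that $\deg g=u$), and that $e$ equals the multiplicative order of $\alpha$; the paper's route is shorter because irreducibility does the counting for it. Both are complete; no gap in your argument.
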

\begin{proof}
Let $\alpha$ be an arbitrary root of $f(x)=0$ and $u=\ord_e q$.  Observe that  $\alpha\in \F_{q^m}$ is a primitive $e-$th root of unity. Let define the polynomial  $$g(x):=\prod_{j=1}^{u}(x-\alpha^{q^j}).$$
Since the coefficients  of $g(x)$ are invariant by the Frobenius automorphism 
$$\begin{matrix}\tau:&\F_{q^m}&\to& \F_{q^m}\\ &\beta&\mapsto&\beta^q,\end{matrix}$$
it follows that  $g(x)\in  \F_q[x]$. In addition, since $\tau^{j}(f(x))=f(x)$ for all $j\in \N$, we conclude that $\alpha^{q^j}$ are also roots of $f(x)$ for all $j$. 
So,  $g(x)$ divides $f(x)$. But, by hypothesis,  $f(x)$ is irreducible,  therefore $f(x)=g(x)$ and $m=\deg f(x)=\deg g(x)=\ord_e q$ as we want to prove.\qed
\end{proof}

The following Lemma shows a case when is possible to  find every irreducible  factor of $f(x^{p^t})$,  when one irreducible factor is known.

\begin{lemma}\label{fatores} Let $f(x)$ be an irreducible polynomial of degree $m$ and exponent $e$. Let  $n>1$ be  a positive  divisor  of $q-1$ and $\nu_p(n)+\nu_p(e)\le \nu_p(q-1)+\nu_p(\ord_{r_p} q)$,
for all $p$ prime divisor of $n$,  where $r_p$ is the largest divisor of $e$ prime with $p$.  
Then the polynomial $f(x^{n})$ splits as  product of  $n$ irreducible polynomials of  degree $m$.
 In addition, if $g(x)$ is any monic irreducible factor of $f(x^{n})$ and $c$ is any element of  $\U(n)$, then
$$f(x^{n})=\prod_{i=0}^{n-1}[c^{-mj}g(c^jx)]$$
is the factorization of $f(x^{n})$ into irreducible factors.
\end{lemma}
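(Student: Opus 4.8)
The plan is to analyze the roots of $f(x^n)$ directly. Fix a root $\alpha$ of $f$; by Lemma~\ref{frobenius} the roots of $f$ are $\alpha,\alpha^q,\dots,\alpha^{q^{m-1}}$, each a primitive $e$-th root of unity, with $m=\ord_e q$. Since $n\mid q-1$ we have $\gcd(n,q)=1$, so $f(x^n)$ is separable of degree $mn$, and its roots are the $n$-th roots of the $\alpha^{q^l}$. As $n\mid q-1$, all $n$-th roots of unity lie in $\F_q$, so for a fixed $n$-th root $\beta_0$ of $\alpha$ the full root set is $\{\beta_0^{q^l}c^i : 0\le l<m,\ 0\le i<n\}$, where $c\in\U(n)$. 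For any root $\beta$ we have $\F_q(\beta^n)\subseteq\F_q(\beta)$ with $[\F_q(\beta^n):\F_q]=m$, so $m$ divides the degree of the irreducible factor containing $\beta$; hence the factor count reduces to showing every such degree equals $m$, i.e.\ that every root lies in $\F_{q^m}$. Because the $n$-th roots of unity already lie in $\F_q$, this is equivalent to the single assertion that $\alpha$ admits an $n$-th root in $\F_{q^m}^*$.

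I would phrase this arithmetically. Since $\F_{q^m}^*$ is cyclic of order $q^m-1$ and $\alpha$ has order $e$, $\alpha$ is an $n$-th power exactly when $e\cdot\gcd(n,q^m-1)$ divides $q^m-1$; equivalently, for every prime $p\mid n$,
$$\nu_p(e)+\min\{\nu_p(n),\nu_p(q^m-1)\}\le \nu_p(q^m-1).$$
This valuation estimate is the technical heart. The key simplification is that $p\mid n\mid q-1$ forces $\ord_p q=1$, so for odd $p$ lifting the exponent gives $\nu_p(q^m-1)=\nu_p(q-1)+\nu_p(m)$, while $\ord_{p^{\nu_p(e)}}q=p^{\max\{0,\nu_p(e)-\nu_p(q-1)\}}$ together with $m=\lcm(\ord_{p^{\nu_p(e)}}q,\ord_{r_p}q)$ yield $\nu_p(m)=\max\{0,\nu_p(e)-\nu_p(q-1),\nu_p(\ord_{r_p}q)\}$. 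Combining these gives $\nu_p(q^m-1)\ge \nu_p(q-1)+\nu_p(\ord_{r_p}q)$, which by hypothesis is at least $\nu_p(n)+\nu_p(e)$, and the displayed inequality follows at once. The main obstacle I expect is the prime $p=2$, where lifting the exponent behaves differently: I would split into $4\mid q-1$ (which mimics the odd case) and $q\equiv 3\pmod 4$ (where $n\mid q-1$ caps $\nu_2(n)=1$, and the factor $\nu_2(q+1)\ge 2$ supplies the slack needed when $4\mid m$). Once this is settled, $f(x^n)$ factors into exactly $n$ distinct monic irreducibles, all of degree $m$.

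It remains to produce these factors explicitly from one of them. Given a monic irreducible factor $g$ and $c\in\U(n)$, I note that $c^n=1$ gives $f((c^jx)^n)=f(c^{jn}x^n)=f(x^n)$, so substituting $x\mapsto c^jx$ into any factorization shows that $g(c^jx)$ divides $f(x^n)$; since $g$ is monic of degree $m$ its leading coefficient becomes $c^{jm}$, so $c^{-jm}g(c^jx)$ is a monic irreducible factor. Writing the roots of $g$ as $\beta_0^{q^l}$, the roots of $c^{-jm}g(c^jx)$ are $c^{-j}\beta_0^{q^l}$, and I would prove the $n$ factors pairwise distinct by showing the cyclic substitution acts on them with a single orbit of length $n$: if $c^{j}\beta_0=\beta_0^{q^{l}}$ for some $0<j<n$, raising to the $n$-th power gives $\alpha^{q^{l}-1}=c^{jn}=1$, whence $m=\ord_e q$ divides $l$, forcing $l=0$ and then $n\mid j$, a contradiction. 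Thus the $n$ polynomials $c^{-jm}g(c^jx)$, $0\le j\le n-1$, are distinct monic irreducible factors of degree $m$, and a degree count (total $mn$) shows they exhaust $f(x^n)$, giving the stated formula.
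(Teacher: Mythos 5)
Your argument is correct, and its second half --- twisting one factor $g$ by $x\mapsto c^jx$, checking via $c^{j}\beta_0=\beta_0^{q^l}\Rightarrow e\mid q^l-1\Rightarrow m\mid l$ that the $n$ twists are pairwise distinct, then counting degrees --- is exactly the paper's. Where you genuinely differ is the first half, the claim that every irreducible factor has degree $m$. The paper reduces to $n=p^t$, asserts that any irreducible factor $g$ of $f(x^{p^t})$ has exponent $p^te$, and computes $\ord_{p^te}q=m$ from the hypothesis, so that Lemma \ref{frobenius} gives $\deg g=m$. You instead keep $n$ general and show that all roots of $f(x^n)$ lie in $\F_{q^m}$, which (since $\U(n)\subset\F_q$) reduces to the single statement that $\alpha$ is an $n$-th power in the cyclic group $\F_{q^m}^*$, i.e.\ $e\cdot\gcd(n,q^m-1)\mid q^m-1$; the same lifting-the-exponent valuation estimates then close the argument. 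The two routes are equivalent in substance, but yours has two small advantages: it does not rely on the intermediate claim that every factor of $f(x^n)$ has exponent $ne$ (which is not literally true when $p\nmid e$ --- consider $f(x)=x-1$ --- even though the degree conclusion survives), and it isolates more cleanly where the prime $2$ is delicate: the identity $\ord_{2^{k+t}}q=2^{\max\{0,\,k+t-\nu_2(q-1)\}}$ implicit in the paper's computation is only valid when $4\mid q-1$, whereas you explicitly split off $q\equiv 3\pmod 4$ and observe that $\nu_2(q+1)\ge 2$ supplies the needed slack. The one thing still missing from your write-up is that $2$-adic case in full detail (and the routine verification that the displayed valuation inequality for $p\mid n$ really is equivalent to the $n$-th power criterion, including the degenerate branch where $\nu_p(q^m-1)\le\nu_p(n)$ forces $\nu_p(e)=0$); both are straightforward, but they are the only parts of your proof that remain a sketch.
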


\begin{proof}
It is enoght to prove the Lemma in the case when $n=p^t$. 
Let $g(x)\in \F_q[x]$ be any irreducible factor of $f(x^{p^t})$. 
 Since every root of $f(x)$ is in $\U(e)$, then every root of $g(x)$ is in $\U(p^te)$  and therefore the exponent of $g(x)$ is $p^te$.
 In addition,  by the hypotheses,  $p^t$ divides $q-1$ and $\nu_p(q-1)+\nu_p(\ord_r q)\ge k+l$. Therefore $$\nu_p(\ord_{p^{k+t}} q)=\max\{0, k+t-\nu_p(q-1)\}\le \nu_p(\ord_r q)$$
and
$$\ord_{p^te} q=\ord_{p^{t+k}r} q=\lcm( \ord_{p^{t+k}} q ,\ord_{r} q)=m.$$ 
By Lemma \ref{frobenius}, we conclude that $\deg f(x)=\deg g(x)=m$. Thus, any irreducible factor of $f(x^{p^t})$ has the same degree $m=\deg f(x)$ and since $f(x^{p^t})$ has degree $mp^t$, there are exactly $p^t$ factors.

On the other hand, if $\theta$ is a primitive $(q-1)$-th root of unity, then $\theta^{(q-1)/p^t}\in \U(p^t)$, consequently $\U(p^t)\ne \emptyset$. 
Let $c$ be any element of $\U(p^t)$ and $g(x)$ be any monic irreducible factor of $f(x^{p^t})$. 
So $g(x)$ divides $f(x^{p^t})$, and, changing  $x$ by $c^jx$ we have $g(c^jx)$ divides $f((c^jx)^{p^t})=f(x^{p^t})$. 
Thereby  $g(c^jx)$ is a polynomial in $\F_q[x]$ that also divides $f(x^{p^t})$.  
Therefore  $[c^{-mj}g(c^jx)]$ is a monic irreducible  factor of  $f(x^{p^t})$.
\par 
Finally, we have to prove that every irreducible factor of $f(x^{p^ t})$ can be generated by this method. 
 In fact, we need to prove  that if $0\le i<j<p^t$ then $[c^{-mj}g(c^jx)]$ and $[c^{-mi}g(c^ix)]$ have no  roots in common. 
Since the polynomials  are  monic and irreducible, we just have to show that they are  different. But, if there exist $0\le j<i<p^t$ such that $[c^{-mj}g(c^jx)]=[c^{-mi}g(c^ix)]$, writing $y=c^{j}x$ we have 
\begin{equation}
\quad c^{-m(i-j)}g(c^{(i-j)}y)=g(y),\label{eq1}
\end{equation}
then, we can suppose without loss of generality that $j=0$. 

Let $\beta$ be any root of $g(x)$. Since the exponent of $g(x)$ is $p^te$, then $\beta$ is a primitive $p^te-$th root of unity and by lemma \ref{frobenius}, we know that 
$$g(x)=\prod_{j=1}^{m}(x-\beta^{q^j}).$$ 
By the equation (\ref{eq1}),  we have that
$c^{-mi}g(c^i\beta)=g(\beta)=0$, i.e. $c^i\beta$ is also a root of $g(x)$ and then $c^i\beta=\beta^{q^l}$ for some integer $1\le l<m$. 
Therefore $\beta^{p^t(q^l-1)}=c^{p^ti}=1$ and then $p^t(q^l-1)$ is a multiple of $p^te$, which is equivalent to $e|q^l-1$.   
Again by Lemma \ref{frobenius}, we know that   $m=ord_eq$, therefore $m$ divides $l$,  but this is a contradiction because  $0<l<m$.
 We conclude that the polynomials  $[c^{-mi}g(c^ix)]$, where $0\le i<p^t$,  are  monic irreducible polynomials with no factor in common, such that each one divides $f(x^{p^t})$. In particular 
$$F(x)=\prod_{i=0}^{p^t-1}[c^{-mj}g(c^jx)]$$
is a monic polynomial that divides $f(x^{p^t})$ and it has degree $mp^t$. Thus $F(x)=f(x^{p^t})$, as desired.\qed
\end{proof}

\begin{remark} 
Observe that, the conditions (1) and (2)  of Theorem \ref{irreducible} can be rewritten as
$$\nu_p(e)\ge 1\quad\text{and}\quad \nu_p(q^m-1)=\nu_p(e)$$
for all $p$ prime divisor of $n$.  Since $\ord_{r_p}q$ divides $\ord_e q=m$ and $$\nu_p(q^m-1)=\begin{cases} \nu_p(q-1)+\nu_p(m) &\text{ if $p$ is odd}\\
\nu_2(q-1) &\text{ if $p=2$ is odd and $m$ is odd}\\
\nu_2(q^2-1)+\nu_2(m)-1  &\text{ if $p=2$ is odd and $m$ is even,}\\
\end{cases} $$ in particular,
$\nu_p(q^m-1)\ge \nu_p(e)+\nu_p(n)\ge \nu_p(e)+1$ for all  $p$ prime divisor of $n$, 
  then  the  condition of Lemma \ref{fatores} is a sufficient (but not necessary) condition in order to  $f(x^n) $  be a reducible polynomial.  
\end{remark}

It is clear that this lemma does not give any information in order to  construct an irreducible factor of $f(x^n)$.  Assuming the  following condition,  that implied the condition of Lemma \ref{fatores}, we are going to show  an algorithm to construct one of those factors.

\begin{definition} We say that the pair $\langle f(x), n\rangle $ satisfies the {\em reducible condition}, if $f(x)$ is a monic irreducible polynomial of degree $m$ and exponent $e$ and $$\nu_p(q-1) \ge \nu_p(n)+\nu_p(e)$$
for all $p$ prime divisor of $n$.
\end{definition}

The main result of this paper is the following

\begin{theorem}\label{redutivel}
Let $f(x)\in \F_q[x]$ be a monic  irreducible polynomial of degree $m$ and exponent $e$, and $p^t$ such that $\langle f(x), p^t\rangle $ satifies the reducible condition. Suposse that $k=\nu_p(e)$ and  $e=p^kr$. Then
\begin{enumerate}[(a)]
\item There exists  a unique element $c\in \F_q$ such that $f(x)$ divides $x^r-c$. In addition, $c=b^{p^t}$ for some $b\in \F_q$. Moreover, if $c=1$, then $b\in \mathcal U(p^t)$.
\item Let $A$ be any matrix with coefficients in $\F_q$ such that $f(x)$ is the characteristic polynomial of $A$,  $s$ be  the solution of $sr\equiv 1\pmod{p^t}$ with $0<s<p^t$, and $l=\frac {sr-1}{p^t}$. Then  $g(x)=\det(xI-b^{-s}A^l)\in \F_q[x]$ is a irreducible factor of $f(x^{p^t})$.
\item $a=b^{p^k}\in \U(p^t)$ and the polynomial $f(x^{p^t})$ splits into monic irreducible  factors over $\F_q[x]$ as
$$f(x^{p^t})=\prod_{j=0}^{p^t-1}[a^{-mj}g(a^jx)].$$
\end{enumerate}
\end{theorem}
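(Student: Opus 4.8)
The plan is to fix a root $\alpha$ of $f$, so that $\alpha$ is a primitive $e$-th root of unity in $\F_{q^m}$ with $e=p^kr$ and $\gcd(r,p)=1$, and to carry out everything by tracking exponents of $\alpha$. For part (a) I would set $c:=\alpha^r$ and first note that $\ord(c)=e/\gcd(e,r)=p^k$. The reducible condition gives $\nu_p(q-1)\ge t+k\ge k$, hence $p^k\mid q-1$; this makes $c$ invariant under the Frobenius $\tau\colon\beta\mapsto\beta^q$, since $c^{q}=\alpha^{rq}=\alpha^{r}\cdot\alpha^{r(q-1)}=c$ because $e\mid r(q-1)$. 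Thus $c\in\F_q$, every conjugate $\alpha^{q^i}$ satisfies $(\alpha^{q^i})^r=c$, and therefore $f(x)$ divides $x^r-c$; uniqueness of $c$ is immediate, as two such constants would coincide modulo the positive-degree polynomial $f$. To produce $b$ I would use that the subgroup $(\F_q^{*})^{p^t}$ of $p^t$-th powers has order $(q-1)/p^t$ (here $\gcd(q-1,p^t)=p^t$) and hence consists of all elements whose order divides $(q-1)/p^t$; since $\nu_p((q-1)/p^t)=\nu_p(q-1)-t\ge k$ and $\ord(c)=p^k$, the element $c$ lies in this subgroup, so $c=b^{p^t}$ for some $b\in\F_q$. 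Any such $b$ has $p$-power order, and in the case $c=1$ I would simply choose $b$ to be a primitive $p^t$-th root of unity, which exists because $p^t\mid q-1$.

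The heart of the argument is part (b). The matrix $b^{-s}A^{l}$ has entries in $\F_q$, so its characteristic polynomial $g(x)=\det(xI-b^{-s}A^{l})$ lies in $\F_q[x]$; since the eigenvalues of $A$ are exactly the roots $\alpha^{q^i}$ of $f$, the roots of $g$ are $\beta_i:=b^{-s}\alpha^{lq^i}$, and in particular $\beta:=b^{-s}\alpha^{l}$ is one of them. The key computation is to raise $\beta$ to the power $p^t$: using $b^{p^t}=c=\alpha^r$ together with the defining relation $lp^t=sr-1$, a direct manipulation of exponents shows that $\beta^{p^t}$ is a root of $f$, so that $f(\beta^{p^t})=0$ and $\beta$ is a root of $f(x^{p^t})$. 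Consequently the minimal polynomial of $\beta$ over $\F_q$ divides $f(x^{p^t})$. At this point I would invoke Lemma~\ref{fatores}, whose hypotheses are guaranteed by the reducible condition $\nu_p(q-1)\ge t+k$: it asserts that $f(x^{p^t})$ factors into exactly $p^t$ irreducible polynomials, each of degree $m$. Hence the minimal polynomial of $\beta$ has degree $m$; being monic of degree $m$ with $\beta$ as a root, $g$ must coincide with it, which proves that $g$ is irreducible and divides $f(x^{p^t})$.

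For part (c) I would compute the order of $a=b^{p^k}$. Since $b^{p^t}=c$ has order $p^k$ and $b$ has $p$-power order, one obtains $\ord(b)=p^{t+k}$ (in the degenerate case $c=1$ this is precisely the primitive choice fixed in (a)), and therefore $\ord(a)=\ord(b^{p^k})=p^{t+k}/\gcd(p^{t+k},p^k)=p^{t}$, so $a\in\U(p^t)$. Equivalently, $a^{p^t}=b^{p^{t+k}}=c^{p^k}=\alpha^{rp^k}=\alpha^{e}=1$ while no smaller power vanishes. With one explicit irreducible factor $g$ from part (b) and a primitive $p^t$-th root of unity $a$ in hand, the identity $f(x^{p^t})=\prod_{j=0}^{p^t-1}[a^{-mj}g(a^jx)]$ is exactly the ``in addition'' conclusion of Lemma~\ref{fatores}, applied with the primitive root $a$ and the factor $g$.

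The step I expect to be the main obstacle is the exponent bookkeeping in part (b): one must check both that the scalar-times-power construction keeps the characteristic polynomial over $\F_q$ and, crucially, that the precise choices of $s$ and $l$ make $\beta^{p^t}$ land on a genuine root of $f$ rather than on some unrelated $p^t$-th root of unity. Once this single explicit factor is verified, parts (a) and (c) are comparatively routine, and the full factorization follows by feeding $g$ into Lemma~\ref{fatores}.
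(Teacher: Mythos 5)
Your proposal reproduces the paper's own argument almost step for step: part (a) by showing $c=\alpha^r$ is Frobenius-invariant and lies in the subgroup of $p^t$-th powers of $\F_q^*$, part (b) by reading off the eigenvalues of $b^{-s}A^l$ and producing the auxiliary root $\beta=b^{-s}\alpha^l$ of $f(x^{p^t})$, and part (c) by feeding one explicit factor into Lemma~\ref{fatores}. The only structural difference is cosmetic: to see that $g$ equals the minimal polynomial of $\beta$ you use the degree count supplied by Lemma~\ref{fatores}, whereas the paper identifies the conjugates $\beta^{q^j}$ with the eigenvalues of $b^{-s}A^l$ directly via Lemma~\ref{frobenius}; both routes are fine.

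However, the one computation you defer --- ``a direct manipulation of exponents shows that $\beta^{p^t}$ is a root of $f$'' --- is exactly where the argument does not go through as announced. With $b^{p^t}=c=\alpha^r$ and $lp^t=sr-1$ one gets
$$\beta^{p^t}=(b^{p^t})^{-s}\,\alpha^{lp^t}=c^{-s}\alpha^{sr-1}=\alpha^{-rs}\alpha^{sr-1}=\alpha^{-1},$$
and $\alpha^{-1}$ need not be a root of $f$: it is one only when $f$ equals its reciprocal polynomial. (Concretely, take $q=7$, $p^t=2$, and $f$ the cubic factor of $\Phi_9$ with root $\alpha$; the reducible condition holds, the conjugates of $\alpha$ have exponents $\{1,4,7\}$ modulo $9$, and $-1\equiv 8$ is not among them.) So as written $\beta$ is a root of $f^*(x^{p^t})$, where $f^*$ is the reciprocal of $f$, and $g$ divides $f^*(x^{p^t})$ rather than $f(x^{p^t})$. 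The repair is a sign: take $\beta=b^{s}\alpha^{-l}$, so that $\beta^{p^t}=c^{s}\alpha^{1-sr}=\alpha$, and correspondingly $g(x)=\det(xI-b^{s}A^{-l})$ (note $A^e=I$ since $f(x)\mid x^e-1$, so $A^{-l}=A^{e-l}$ is still an honest power of $A$); equivalently, choose $s$ with $sr\equiv-1\pmod{p^t}$ and $l=(sr+1)/p^t$. For what it is worth, the paper's own proof contains the identical slip (it asserts $\alpha^{-rs}=\alpha(\alpha^{-l})^{p^t}$, where the correct identity is $\alpha^{-rs}=\alpha^{-1}(\alpha^{-l})^{p^t}$), and both of its numerical examples happen to use self-reciprocal $f$, which is why the discrepancy never surfaces there. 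Your instinct that this bookkeeping is the crux of part (b) was exactly right; the proposal needed to actually carry it out.
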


\begin{proof}
Let $\alpha$ be the class of $x$ in the extension $\F_{q^ m}=\F_q[x]/f(x)$. % i.e., $\alpha\equiv x\pmod {f(x)}$.  
Since $f(x)$ divides $x^e-1$,  it follows that $\alpha^e=1$ and $\ord(\alpha)=e=p^kr$. 

By hypotheses, $\langle f(x),p^t\rangle $ satisfies the reduction condition, or equivalently   $p^{k+t}$ divides $q-1$.  So, there exists an integer $L$ such that  $q=p^{k+t}L+1$.
From the fact that
$$(\alpha^r)^{(q-1)/p^t}=(\alpha^e)^{L}=1,$$ 
we obtain that  $\alpha^r\in \F_{q^ m}$ is a root of the polynomial $y^{(q-1)/p^t}-1=0$. But  the roots of that polynomial are the $p^t-$th perfect powers  in $\F_q$,  therefore, there exists $c=b^{p^t}$, with $b\in \F_q$, such that $\alpha^r=c\in \F_q$, or equivalently $x^r\equiv c\pmod{f(x)}$. It is easy to see that when $c=1$ we can take $b\in \U(p^t)$.
This proves item (a).

In order to prove  (b), since $\ord(\alpha)=e=p^kr$, by the previous item, we have that
$$
\ord(c)=\ord(\alpha^r)=p^k.$$
 Therefore, if $b\in \F_q$ is a $p^{t}-$th root of $c$ (or a primitive $p^t-$th root of unity, when $c=1$), then $b$ has order $p^{t+k}$ 
and so $b^{p^k}$ has order $p^t$, i.e., $a=b^{p^k}\in \U(p^t)$.
\par Taking $(-s)-$th powers in the equation $\alpha^r=c=b^{p^t}$, 
we obtain $\alpha^{-rs}=\alpha(\alpha^{-l})^{p^t}=b^{-p^ts}$, or equivalently  $\alpha=(\alpha^{l}b^{-s})^{p^t}$. If we set $\beta:= \alpha^lb^{-s}$ then $\beta^{p^t}=\alpha$.
 In particular, since  $f(\beta^{p^t})=f(\alpha)=0$, $\beta$ is a root of some irreducible factor $g(x)$ of $f(x^{p^t})$.
 By lemma \ref{frobenius} we have $$g(x)=\prod_{j=1}^{m}(x-\beta^{q^j})=\prod_{j=1}^{m}(x-(b^{-s}\alpha^l)^{q^j})=\prod_{j=1}^{m}(x-b^{-s}\alpha^{lq^j}).$$  

Now, if $A$ is a matrix such that $f(x)$ is the characteristic polynomial of $A$, since every roots of $f(x)$ are different, the matrix $A$ is diagonalizable and the eigenvalues are the roots of $f(x)$. 
It follows that the eigenvalues of  $b^{-s}A^l$  are of the form $b^{-s}\alpha^l$, where $\alpha$  runs through the roots  of $f(x)$, i.e. the eigenvalues of $b^{-s}A^l$ are the roots of the polynomial $g(x)$, and since $g(x)$ and $\det(x-b^{-s}A^l)$ are monic polynomials of  the same degree, we conclude $g(x)=\det(x-b^{-s}A^l)$.

Finally, since $a\in \U(p^t)$ and $g(x)$ is a monic irreducible factor of $f(x^{p^t})$, by lemma \ref{fatores} we have that
$$f(x^{p^t})=\prod_{j=0}^{p^t-1}[a^{-mj}g(a^jx)]$$ is the factorization of $f(x^{p^t})$ into irreducible factors over $\F_q$, as desired. \qed
\end{proof}

\begin{remark}
If $\langle f(x), n\rangle $ satisfies the reducible condition, where  $n=\prod_{i=1}^up_i^{\beta_i}$, then iterating the process  for each prime divisor, we obtain $n$  irreducible factors of $f(x^n)$. In addition, in Theorem \ref{redutivel} (b), we can take
$$A=\begin{pmatrix}0&1&0&\cdots&0\\ 0&0&1&\cdots&0\\
 \vdots&\vdots&\vdots&\ddots&\vdots\\ 
0&0&0&\cdots&1\\
-a_0&-a_1&-a_2&\cdots&-a_{m-1}
\end{pmatrix}$$
just the companion matrix of the polynomial   $f(x)=x^m+a_{m-1}x^{m-1}+\cdots+a_1x+a_0=\det(xI-A)$.
\end{remark}

\begin{corollary}\label{factorradical}  Let $f(x)\in \F_q$ be a monic irreducible polynomial of degree $m$ and exponent $e$. Let $n$ be a positive integer such that 
\begin{enumerate}
\item $\rad(n)$ divides $q-1$
\item $gcd(m, n)=1$
\item if $\nu_2(n)+\nu_2(e)\ge \nu_2(q-1)+2$ then $q\equiv 1 \pmod 4$
\end{enumerate}
Every factor of $f(x^n)$ is of the form $g(x^{n/\rho_{n,q,e}})$, where $\rho_{n,q,e}=\prod\limits_{p|n} p^{\min\{\nu_p(n), \nu_p(q-1)-\nu_p(e)\}}$  and $g(x)$ is any irreducible factor of $f(x^{\rho_{n,q,e}})$ 
obtained from the process described in Theorem \ref{redutivel}.
\end{corollary}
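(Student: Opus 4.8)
The strategy is to split $n = \rho \cdot (n/\rho)$ with $\rho := \rho_{n,q,e}$, factor $f(x^{\rho})$ by Theorem \ref{redutivel}, and then check that the substitution $x \mapsto x^{n/\rho}$ preserves irreducibility of the resulting factors. The computation underlying everything is that, for each prime $p \mid n$, the hypothesis $\gcd(m,n)=1$ gives $p \nmid m$ (and $m$ odd when $p=2$), so the formula recalled in the Remark yields $\nu_p(q^m-1) = \nu_p(q-1)$. Since $e \mid q^m-1$, this forces $\nu_p(e) \le \nu_p(q-1)$ for every $p \mid n$; in particular the exponents $\min\{\nu_p(n),\, \nu_p(q-1)-\nu_p(e)\}$ defining $\rho$ are nonnegative, so $\rho$ is a genuine divisor of $n$. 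Because $\nu_p(\rho) \le \nu_p(q-1)-\nu_p(e)$, the pair $\langle f(x), \rho\rangle$ satisfies the reducible condition, and applying Theorem \ref{redutivel} to each prime-power part of $\rho$ (as in the Remark) yields $f(x^{\rho}) = \prod_i g_i(x)$, a product of $\rho$ monic irreducible factors, each of degree $m$ and of exponent $e' = \rho e$ (as in the proof of Lemma \ref{fatores}).

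Writing $n' = n/\rho$ and substituting $x \mapsto x^{n'}$, I get $f(x^n) = f((x^{n'})^{\rho}) = \prod_i g_i(x^{n'})$, so it remains to show each $g_i(x^{n'})$ is irreducible. For this I would apply Theorem \ref{irreducible} to an irreducible $g := g_i$ of degree $m$ and exponent $e'$. The relevant valuations are, for $p \mid n$,
$$\nu_p(n') = \nu_p(n) - \min\{\nu_p(n),\, \nu_p(q-1)-\nu_p(e)\} = \max\{0,\, \nu_p(n)+\nu_p(e)-\nu_p(q-1)\},$$
so that a prime $p$ divides $n'$ precisely when $\nu_p(n) > \nu_p(q-1)-\nu_p(e)$, in which case $\nu_p(\rho) = \nu_p(q-1)-\nu_p(e)$ and hence $\nu_p(e') = \nu_p(\rho)+\nu_p(e) = \nu_p(q-1)$.

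Now I verify the three conditions of Theorem \ref{irreducible} for $g(x^{n'})$. Condition (1), $\rad(n') \mid e'$: each $p \mid n'$ has $\nu_p(e') = \nu_p(q-1) \ge 1$, since $p \mid \rad(n) \mid q-1$. Condition (2), $\gcd(n', (q^m-1)/e') = 1$: for each $p \mid n'$ the opening computation gives $\nu_p(q^m-1) = \nu_p(q-1) = \nu_p(e')$, so $p \nmid (q^m-1)/e'$. Condition (3): if $4 \mid n'$ then $\nu_2(n') \ge 2$, which by the displayed formula means $\nu_2(n)+\nu_2(e) \ge \nu_2(q-1)+2$; hypothesis (3) of the Corollary then forces $q \equiv 1 \pmod 4$, and since $m$ is odd this gives $4 \mid q^m-1$. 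Thus each $g_i(x^{n'})$ is irreducible of degree $m n'$.

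Finally, $f(x^n) = \prod_{i=1}^{\rho} g_i(x^{n'})$ is a product of $\rho$ irreducible polynomials of degree $m n'$, with total degree $\rho \cdot m n' = mn = \deg f(x^n)$, so this is the complete factorization of $f(x^n)$ into irreducibles. By uniqueness of factorization, every irreducible factor of $f(x^n)$ is of the claimed form $g(x^{n/\rho})$ with $g$ an irreducible factor of $f(x^{\rho})$ produced by Theorem \ref{redutivel}. I expect the main obstacle to be the prime-by-prime valuation bookkeeping in conditions (2) and (3)---in particular isolating exactly which primes divide $n'$ and treating the $p=2$ case, for which hypothesis (3) of the Corollary is precisely the missing ingredient.
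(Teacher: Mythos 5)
Your proof is correct and follows essentially the same route as the paper: both arguments show $\rho_{n,q,e}$ is a well-defined divisor of $n$ for which $\langle f(x),\rho_{n,q,e}\rangle$ satisfies the reducible condition, factor $f(x^{\rho_{n,q,e}})$ via Theorem \ref{redutivel}, and then verify the hypotheses of Theorem \ref{irreducible} for $g(x^{n/\rho_{n,q,e}})$. In fact you are somewhat more thorough than the paper, which leaves the verification of condition (3) of Theorem \ref{irreducible} (where hypothesis (3) of the Corollary and the oddness of $m$ are actually used) implicit, whereas you spell it out.
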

\begin{proof} Since $e$ divides $q^m-1$, for every $p$ prime divisor of $n$, we have that $gcd(p,m)=1$ and
$$\nu_p(e)\le \nu_p(q^m-1)=\nu_p(q-1)+\nu_p(m)=\nu_p(q-1),$$
hence it follows that $\rho_{n,q,e}$ is well defined  integer. In addition,  we can see $\rho_{n,q,e}$ as   the great divisor of $n$ such that  $\langle f(x), \rho_{n,q,e}\rangle $ satisfies the reducible condition.  
Now, suppose that  $g(x)\in \F_q[x]$ is an arbitrary  irreducible factor of $f(x^{\rho_{n,q,e}})$, then by Theorem \ref{redutivel}, we know that $g(x)$ has degree $m$ 
and exponent $e\cdot\rho_{n,q,e}$.  On the other hand,  for each $p$ prime divisor of  $\dfrac n{\rho_{n,q,e}}$, we have that  $\nu_p(\rho_{n,q,e})=\nu_p(q-1)-\nu_p(e)$
and then
$$\nu_p(q^m-1)=\nu_p(q-1)+\nu_p(m)=\nu_p(e\rho_{n,q,e}).$$
Thus, the previous relation implies that  $g(x^{n/\rho_{n,q,e}})$ satisfies the condition of Theorem \ref{irreducible} and therefore it is a monic irreducible  factor  of $f(x^n)$. \qed  
\end{proof} 

\begin{remark} Observe that if $q\equiv 3\pmod 4$, $f(x)\in\F_q[x]$ is a monic irreducible polynomial of odd degree $m$ and exponent $e$, and $n=2^t$ with $t+\nu_2(e)\ge \nu_2(q-1)+2$, 
then  we cannot applied the Corollary \ref{factorradical}  directly.  However,  since $f(x)$ is also an irreducible polynomial in $\F_{q^2}[x]$, 
  we can see $f(x)$ as an irreducible  polynomial with coefficient in $\F_{q^2}=\F_q(\sqrt{-1})$ and then,  using the Corollary,  we obtain irreducible factors of the form $h(x):=g(x^{n/\rho_{n,q^2,e}})\in \F_{q^2}[x]$. At this point, we have two cases to consider:  
\begin{itemize}
  \item If $h(x)\in \F_q[x]$, we have nothing else to do.
\item  If $h(x)\in \F_q^2[x]\setminus\F_q[x]$ then  $h(x)\cdot\overline{h(x)}\in \F_q[x]$  is an irreducible factor of $f(x^n)$ in  $\F_q[x]$, where $\overline{h(x)}$ means change $\sqrt{-1}$ by $-\sqrt{-1}$ in each coefficient of $h(x)$ where it appears. In fact, $\overline{h(x)}$  is equivalent to applying  the Frobenius map $\alpha\mapsto \alpha^q$ in each coefficient of $h(x)$.  
\end{itemize}
\end{remark}

\section{Numerical Examples}

\begin{example}
We consider $f(x)=x^2-11x+1\in \F_{59}[x]$. Since $disc(f(x))=11^2-4\equiv -1 \pmod{59}$ 
%$4f(x)=(2x-11)^2+1$
 and $-1$ is a quadratic non-residue, it follows that $f(x)$ has no roots, hence it is irreducible. 
Observe  that $\Phi_{12}(x)=(x^2-11x+1)(x^2+11x+1)$ and so $f(x)$ has exponent $e=12$. We are going to find the complete factorization of $f(x^{29^{d+1}})=x^{2\cdot 29^{d+1}}-11x^{29^{d+1}}+1$ for all $d\ge 1$.

Firstly, we are going to consider the case $d=0$. Notice that gcd$(12, 29)=1$ and $29|(59-1)$.
Using  the notation of Theorem \ref{redutivel}, we have $r=12$ and $12s\equiv 1\pmod {19}$. Directly, we can see that $s=17$ is a solution of this congruence   and then we can define $l=\frac{rs-1}{29}=7$. 
Now, we just have to find some element in $\mathcal U(29)\subset\F_{59}$. 
Since $59-1=2\times 29$, it follows that any quadratic residue $b\ne 1$, must be a primitive $29-$th root of unity. Using the quadratic reciprocity law, it is  easy verify  that  $5$ is a quadratic residue. So, using the notation of our Theorem, we have $k=0$ and $a=b=5$.\\
Finally, notice that 
$\displaystyle    A=
  {\begin{pmatrix}
   0 & 1 \\       -1 & 11 \\      \end{pmatrix} } 
$
is the companion matrix of $f(x)$ and, in particular, the characteristic polynomial of $A$ is $f(x)$. By the  Theorem, we know that 
$$g(x)=\det (xI-b^{-s}A^l)=\det (xI-3^{-17}A^{7})$$ is a factor of $f(x^{29})$. 
Using a direct computation we find 
$$
   A^{7}=
  {\begin{pmatrix}
   0 & -1 \\       1 & -11 \\     \end{pmatrix} }=-A\ \text{ and }\ 
5^{-17}\equiv 41\pmod{59}.$$ 
So $g(x)=\det(xI+18A)=  \left| {\begin{array}{cc} x & -18 \\       18 & x-21 \\      \end{array} } \right|=x^2-21x+18^2=x^2-21x+29$. \\

Moreover, every monic irreducible factors of $f(x^{29})$ are of the form $$g_j(x)=5^{-2j}g(5^{j}x)=5^{-2j}(25^jx^2-21\times 5^j x+29)=x^2-(21\times 5^{-j})x+29\times 5^{-2j}$$
where $j=0,\dots, 28$. In other words
$$x^{58}-11x^{29}+1=\prod_{i=0}^{28}(x^2-(21\times 12^{j})x+29\times 26^{j}).$$

Now, observe that each factor $g_j(x)$ has degree $2$ and exponent  $12\times 29$, then by  Theorem \ref{irreducible}, the polynomials $g_j(x^{29^d})$ are irreducible. Therefore, 
$$f(x^{29^{d+1}})=\prod_{i=0}^{28}(x^{2\cdot 29^d}-(21\times 12^{j})x^{29^d}+29\times 26^{j})$$
is the decomposition into irreducible factors.
\end{example}

\begin{example}
We consider $f(x)=x^4+x^3+x^2+x+1=\Phi_5(x)\in \F_{q}[x]$, where $q=2^{2^{3}}+1=257$ is the fourth Fermat prime. 
Since $q\equiv2\pmod 5$ and $2$ is primitive element of $\Z_5$, it follows that $f(x)$ is irreducible and has exponent $5$. 

We are going to split the polynomial  $f(x^{2^{8}})=x^{1024}+x^{768}+x^{512}+x^{256}+1$ into irreducible factors. 
Observe that  gcd$(5, 2^{8})=1$ and $2^{8}|q-1$, then $\langle f(x),2^8\rangle $ satisfies  the reducible  condition and then we can apply  Theorem \ref{redutivel}.
Using  the notation of the Theorem, we have $r=5$ and $5s\equiv 1\pmod {2^{8}}$. So,   $s=205$ and $l=\frac{rs-1}{2^{8}}=4$. 
%Solving this congruence we have $s\equiv \frac{4\times 2^{8}+1}{5}\pmod {2^{8}}$, $s=\frac{4\times 2^{8}+1}{5}=205$ and $l=\frac{rs-1}{2^{8}}=4$. 
Now,  we proceed to find an element an  element in $\mathcal U(2^{8})\subset \F_q$. Since $q-1=2^{8}$, it follows that any  quadratic non-residue $b$, must be a primitive $2^{8}-$th root of unity.
 By the Quadratic Reciprocity Law, it is easy to verify that $b=3$ is a quadratic non-residue and  again, using the notation of the Theorem, we have $k=0$ and $a=b=3$.
Finally, we notice that 
 \[
   A=\begin{pmatrix}
   0 & 1 & 0 & 0 \\ 0& 0 & 1 & 0\\ 0 & 0 & 0 & 1\\ -1 & -1& -1 & -1 \\      \end{pmatrix} 
\] 
is the companion matrix of $f(x)$ and, then $f(x)$ is  the characteristic polynomial of $A$. Therefore, we have  that $g(x)=\det (xI-b^{-s}A^l)=\det (xI-3^{-s}A^{4})$ is a factor of $f(x^{2^{8}})$. By a direct calculation, we obtain
 $$B=
   A^{4}=  \begin{pmatrix}
   -1 & -1 & -1 & -1 \\ 1 & 0 & 0 & 0\\ 0 & 1 & 0 & 0\\ 0 & 0 & 1 & 0 \\      \end{pmatrix}  
\ \text{ and }\ 3^{-205}=3^{51}\equiv 54 \pmod{257}.
$$
 So $$g(x)=\det(xI-54 B)=   \left| {\begin{array}{cccc}
  x+54 & 54 & 54 & 54 \\ -54 & x & 0 & 0\\ 0 & -54 & x & 0\\ 0 & 0 & -54 & x \\      \end{array} } \right|=x^4+203 x^3+89 x^2+77 x+211 $$
\end{example}

The previous example is a particular case when $\Phi_{p^t}(x)\in \F_q[x]$ is an irreducible polynomial. This case will be consider in the next section, where we are going to explicit every irreducible factor without use of the companion matrix of the polynomial. 

\section{Explicit  factors of  $x^{2^np^t}-1$}
In this section, we give an explicit factorization of polynomials $x^{2^np^t}-1\in \F_q[x]$, where $p$ is an odd prime, $q$ is primitive element modulo  $p^2$,  $n\le \nu_2(q-1)$ and $t\ge 1$ is any fixed integer.  We note that a more general result was proved by Tuxanidy and  Wang \cite{TuWa}, where they obtain the explicit factorization of the cyclotomic polynomial $\Phi_{2^nr}(x)\in \F_q$, where $r\ge 3$ is  odd with $gcd(r,q)=1$ and $n$ is a positive integer, under the assumption that the explicit factorization of $\Phi_r(x)\in\F_q[x]$ is known.

Since $x^d-1=\prod_{m|d}\Phi_m(x)$, we put our focus on cyclotomic polynomials. First, let us recall two simple (but strong) results about cyclotomic polynomials 

\begin{lemma}[\cite{LiNi}, Exercise 2.57]\label{lem5.1}
Let $\Phi_d(x)$ the $d-$th cyclotomic polynomial over $\F_q[x]$. Then
\begin{enumerate}[(a)]
\item $\Phi_{2d}(x)=\Phi_d(-x)$ for $d>3$ and $d$ odd.
\item $\Phi_{mp^t}(x)=\Phi_{mp}(x^{p^{t-1}})$ if $p$ is a prime and $m, t$ are arbitrary positive integers.
\end{enumerate}
\end{lemma}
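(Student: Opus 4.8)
The plan is to prove both identities first as equalities in $\Z[x]$ by working with complex roots of unity, and then to descend them to $\F_q[x]$: since $\Phi_d(x)\in\Z[x]$ for every $d$, any identity between such polynomials established over $\C$ is automatically an identity in $\Z[x]$, and reducing the integer coefficients modulo the characteristic yields the statement over $\F_q$. This framing sidesteps any worry about the characteristic dividing $2$ or $p$. Throughout I use that $\Phi_d(x)=\prod_{\ord(\zeta)=d}(x-\zeta)$, the product over the primitive $d$-th roots of unity, together with $\deg\Phi_d=\varphi(d)$.

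For part (a) the key point is that, for $d$ odd, the map $\zeta\mapsto-\zeta$ is a bijection from the primitive $d$-th roots of unity onto the primitive $2d$-th roots of unity: since $\ord(-1)=2$ and $\gcd(2,d)=1$, one gets $\ord(-\zeta)=2d$, and $\eta\mapsto-\eta$ is the inverse. Substituting this into the product formula gives
$$\Phi_{2d}(x)=\prod_{\ord(\zeta)=d}\bigl(x+\zeta\bigr)\qquad\text{and}\qquad\Phi_d(-x)=\prod_{\ord(\zeta)=d}\bigl(-x-\zeta\bigr)=(-1)^{\varphi(d)}\prod_{\ord(\zeta)=d}\bigl(x+\zeta\bigr).$$
For odd $d\ge 3$ the value $\varphi(d)$ is even, so $(-1)^{\varphi(d)}=1$ and the two products agree. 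The only delicate bookkeeping here is this parity of $\varphi(d)$, which is exactly the feature that breaks down at $d=1$, where $\Phi_1(-x)=-\Phi_2(x)$.

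For part (b) the cleanest route is induction on $t$, using the standard recursion $\Phi_{pn}(x)=\Phi_n(x^p)$ valid whenever $p\mid n$ (a one-line count of primitive roots, also recorded in \cite{LiNi}). The base case $t=1$ is the identity $\Phi_{mp}(x)=\Phi_{mp}(x^{p^0})$. For the inductive step I write $mp^{t+1}=(mp^t)\cdot p$, observe that $p\mid mp^t$ because $t\ge 1$, and then apply the recursion followed by the inductive hypothesis:
$$\Phi_{mp^{t+1}}(x)=\Phi_{mp^t}(x^p)=\Phi_{mp}\bigl((x^p)^{p^{t-1}}\bigr)=\Phi_{mp}\bigl(x^{p^t}\bigr).$$
A direct alternative is to check that the roots of $\Phi_{mp}(x^{p^{t-1}})$ are the $p^{t-1}$-th roots of the primitive $mp$-th roots of unity and that these are precisely the primitive $mp^t$-th roots; this reduces to the elementary fact that $\ord(\beta)=mp^t$ holds if and only if $\ord(\beta^{p^{t-1}})=mp$, a short computation with $\nu_p$.

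I do not expect either part to pose a real obstacle, since both are classical and ultimately reduce to counting primitive roots of unity. The two places that merely demand attention are the sign $(-1)^{\varphi(d)}$ in (a) and, in (b), the verification that the divisibility hypothesis $p\mid n$ needed for the recursion $\Phi_{pn}(x)=\Phi_n(x^p)$ genuinely holds at every stage of the induction, which it does because the index $mp^t$ is already a multiple of $p$.
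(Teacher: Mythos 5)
The paper does not prove this lemma at all --- it is quoted directly from Lidl--Niederreiter, Exercise 2.57 --- so there is no in-paper argument to compare against; your proof is correct and is the standard one. Part (a) rests on the bijection $\zeta\mapsto-\zeta$ between primitive $d$-th and primitive $2d$-th roots of unity together with the parity of $\varphi(d)$ for odd $d\ge 3$, and part (b) on the recursion $\Phi_{pn}(x)=\Phi_n(x^p)$ for $p\mid n$, both correctly applied; working in $\Z[x]$ and reducing modulo the characteristic is a legitimate way to transfer the identities to $\F_q[x]$, and in the paper's application $\gcd(q,2p)=1$, so these reductions agree with the field-theoretic cyclotomic polynomials used in the text.
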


\begin{lemma}[\cite{LiNi}, Theorem 2.47 (ii)]\label{ciclotomico_irr}
Let $d$ be a positive integer such that gcd$(q, d)=1$, then $\Phi_d(x)$  splits into $\varphi(d)/s$ distinct monic irreducible polynomials in $\F_q[x]$ of the same degree $s$, where $s=\ord_dq$.
\end{lemma}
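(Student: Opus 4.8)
The plan is to analyze the roots of $\Phi_d(x)$ directly. Since $\gcd(q,d)=1$, the polynomial $x^d-1$ is separable over $\F_q$ (its formal derivative $dx^{d-1}$ is coprime to it), so $\Phi_d(x)$ has exactly $\varphi(d)$ distinct roots in $\overline{\F_q}$, and by definition these roots are precisely the primitive $d$-th roots of unity. Consequently $\Phi_d(x)$ is squarefree and its irreducible factors over $\F_q$ are pairwise distinct; it then remains only to determine the degree of each such factor.

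The core step is to show that every primitive $d$-th root of unity $\zeta$ has minimal polynomial of degree exactly $s=\ord_d q$ over $\F_q$. For this I would first observe that $\zeta$ lies in $\F_{q^k}$ if and only if $\zeta^{q^k}=\zeta$, equivalently $\zeta^{q^k-1}=1$; since $\zeta$ has multiplicative order $d$, this holds if and only if $d\mid q^k-1$, i.e.\ $q^k\equiv 1\pmod d$. The least such $k$ is by definition $\ord_d q=s$, so $\F_q(\zeta)=\F_{q^s}$ and $[\F_q(\zeta):\F_q]=s$. Alternatively, since the minimal polynomial of $\zeta$ has exponent exactly $d$, this is immediate from Lemma \ref{frobenius}, which yields that its degree equals $\ord_d q$.

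Because the degree $s$ does not depend on the chosen primitive root $\zeta$, every irreducible factor of $\Phi_d(x)$ over $\F_q$ has the same degree $s$. Finally I compare degrees: $\Phi_d(x)$ has degree $\varphi(d)$ and factors into distinct irreducibles, each of degree $s$, so the number of factors is exactly $\varphi(d)/s$, which completes the argument.

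The main obstacle, though a mild one, is the identification $[\F_q(\zeta):\F_q]=\ord_d q$; everything else is bookkeeping. This hinges on the cyclicity of $\F_{q^k}^*$ together with the criterion that a root of unity of order $d$ lies in $\F_{q^k}$ precisely when $d\mid q^k-1$. Invoking Lemma \ref{frobenius} sidesteps even this point, reducing the whole statement to separability plus a degree count.
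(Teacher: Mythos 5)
Your proposal is correct. Note that the paper itself gives no proof of this lemma: it is quoted verbatim from Lidl--Niederreiter, Theorem 2.47(ii), so there is nothing internal to compare against. Your argument is the standard one (and essentially the one in the cited source): separability of $x^d-1$ from $\gcd(q,d)=1$ gives squarefreeness, the equivalence $\zeta\in\F_{q^k}\iff d\mid q^k-1$ identifies the degree of each irreducible factor as $s=\ord_d q$, and the count $\varphi(d)/s$ follows by comparing degrees. The alternative you mention via Lemma~\ref{frobenius} is also sound, since the minimal polynomial of a primitive $d$-th root of unity has exponent exactly $d$; either route is complete.
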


It is a well known result in number theory that if $q$ is a primitive  element $\Z_{p^2}$ then $q$ is also a primitive element in  $\Z_{p^k}$ for any $k\ge 1$. 
Therefore, by our hypothesis, $q$ is a primitive element  modulo $ {p^j}$  for each $j\ge 1$ and then, by Lemma \ref{ciclotomico_irr}, $\Phi_{p^j}(x)$ is an irreducible polinomial for all $1\le j\le k$. By the recursive relation of ciclotomic  polynomials, we know that
$$\Phi_{p^j}(x)=\prod_{i<p^j, (i, p)=1}(x-\zeta^{i})=\frac{x^{p^j}-1}{x^{p^{j-1}}-1}= x^{p^{j-1}(p-1)}+ x^{p^{j-1}(p-2)}+\cdots+ x^{p^{j-1}}+1,$$
and  by Lemma \ref{lem5.1}, we obtain
$$\Phi_{2p^j}(x)=\Phi_{p^j}(-x)=\prod_{i<p^j, (i, p)=1}(-x-\zeta^{i})= x^{p^{j-1}(p-1)}- x^{p^{j-1}(p-2)}+\cdots- x^{p^{j-1}}+1.$$
It follows that, if $n=0$ or $n=1$, then
$$x^{2^np^t}-1=\prod\limits_{0\le i\le 1 \atop 0\le j\le t}\Phi_{2^ip^j}(x)$$ 
is the decomposition of the polynomial $x^{2^np^t}-1$ into   irreducible factors.
  
Therefore, we only need to find the factor of $\Phi_{2^ip^j}(x)$, where   $2\le i\le \nu_2(q-1)$ and $j\ge 1$. By Lemma \ref{lem5.1} we know that $\Phi_{2^ip^j}(x)=\Phi_{2p^j}(x^{2^{i-1}})=\Phi_{p^j}(-x^{2^{i-1}})$.
 So, the problem is equivalent to factorize $f(x^{2^{i-1}})$ where $f(x)=\Phi_{2p^j}(x)$. 
Observe that, the exponent of $f$ is  $e=2p^j$ and $\nu_2(e)+\nu_2(2^{(i-1)})= i\le \nu_2(q-1)$,  therefore $\langle  f(x), 2^{i-1}\rangle$ satisfies the reducible condition. 
Thus, by Lemma \ref{fatores}, $f(x^{2^{i-1}})$ splits  into $2^{i-1}$ irreducible factors and, by  the same Lemma, it is enough to find one irreducible factor in order to determine  all  other.
%\par In our special case it is easy to find an irreducible polynomial without any characteristic polynomial. 

%In \cite{roots}, there is a fast algorithm for taking square roots over finite fields. Using it, we find some element $a\in \U(2^n)$ (this set is not empty since we have $2^n|q-1$).
% Therefore $b_i=a^{2^{n-i}}\in \U(2^i)$ and so $b_i^{2^{i-1}}=-1$.  Also, we have the following identity

Now, suppose that $\zeta$ is primitive $p^j$-th root of the unity in some field $\mathbb K\supset \F_q$, then
\begin{align}\Phi_{2^ip^j}(x)=\Phi_{p^j}(-x^{2^{i-1}})&=(-1)^{\varphi(2p^j)}\prod_{s<p^j, (s, p)=1}(x^{2^{i-1}}+\zeta^{s})\label{p_ciclo}\\
&= \prod_{s<p^j, (s, P)=1}(x^{2^{i-1}}+\zeta^{s})\nonumber
\end{align}
In addition,  both  sets $\{s; s<p^j, (s,p)=1\}$ and $\{2^{i-1}s; s<p^j, (s,p)=1\}$ generate the same set of  invertible residues module ${p^j}$  and $\U(2^{i-1})\subset \F_q$ is not empty, for all $i\le n$.  We point out that, an explicit element of $\U(2^{i-1})$ can be found  choosing a random element  $\theta\in \F_q^*$ that is not a square in $\F_q$, or equivalently, $\theta$ such that  $\theta^{(q-1)/2}\ne 1$, and we define $b_i:=\theta^{(q-1)/2^{i-1}}\in\U(2^{i-1})$.
This element $\theta$ can be found  after  $d$  random choices with probability $1-\frac 1{2^d}$.  Thus, from (\ref{p_ciclo})  follows that
$$\Phi_{2p^j}(x^{2^{i-1}})=\prod_{s<p^j, (s, p)=1}(x^{2^{i-1}}-b_i^{2^{i-1}}\zeta^{s2^{i-1}})$$
Now, notice that 
$$g(x)=\prod_{s<p^j, (i, p)=1}(x-b_i\zeta^{s})=\sum_{m=0}^{p-1}b_i^{(p-1-m)p^{j-1}}x^{mp^{j-1}}\in \F_q[x]$$
 is a monic polynomial and it divides $\Phi_{2p^j}(x^{2^{i-1}})=f(x^{2^{i-1}})$. Since $b_i\in \U(2^{i-1})$ and  $\deg f(x)=\varphi(2p^j)=\varphi(p^j)$, by Lemma \ref{fatores}  the decomposition of the polynomial  as
$$\Phi_{2^ip^j}(x)=f(x^{2^{i-1}})=\prod_{l=0}^{2^{i-1}-1}[b_i^{-l\varphi(p^j)}g(xb_i^l)],$$
where the explicit forms of each factor $b_i^{-l\varphi(p^j)}g(xb_i^l)$ is
$$b_i^{-l\varphi(p^j)}g(xb_i^l)=b_i^{-l\varphi(p^j)}\sum_{m=0}^{p-1}b_i^{(p-1-m)p^{j-1}}(xb_i^l)^{mp^{j-1}}=\sum_{m=0}^{p-1}b_i^{(1-l)(p-1-m)p^{j-1}}x^{mp^{j-1}}.$$

\section{Efficiency of the process and iterations}
With this process, find the irreducible factors  of $f(x^{p^t})$ requires essentially five step: taking $p^t-$th roots and $n-$th powers of elements in a finite field, calculating $x^r\pmod {f(x)}$ and taking a power of the companion matrix  and the characteristic polynomial of a  $m\times m$  matrix over a finite field. 

Now we discuss the algorithm complexity of each operations in term of number of multiplications in $\F_q$, without any consideration about the characteristic of the field. We point out that, we can improve the complexity calculation in term of the characteristic $q_0$ and $u$, where $q=q_0^u$.% where $T(m, f(x))$ denotes the complexity of hold the product of two  polynomials  in $\F_q[x]$ of degree $m$ modulo $f(x)$

\subsection{Taking powers in $\F_q$  and calculate  $x^r\pmod {f(x)}$}

If $a\in\F_q$, taking squares successively is a well-know fast process for finding $a^n$ in essentially  $2\log_2(n)$ products of elements in $\F_q$.  On the other hand,   product of two polynomial and reduction modulo $f(x)$ can be done with $O(m\log^2 m\log\log m)$  products in $\F_q$ using fast Euclidean algorithm and  Cantor-Kaltofen Algorithm (see \cite{CaKa}). It follows that for calculate $x^r\pmod{f(x)}$ when $r> m$,  we need $O(m\log \frac rm\log^2 m\log\log m)$ product  in $\F_q$.

\subsection{Taking roots in $\F_q$}
Using  Tonelli-Shanks probabilistic Algorithm  (see \cite{Ton, Sha}) we can take square root over $\F_q$  with $(4+2u)\log q+\frac{\nu_2(q-1)^2}4$ products in $\F_q$ with probability $1-\frac 1{2^u}$ (see \cite{Tor}).
Iterating the process we find $2^t-$th roots of elements in a finite field  with $(4t+2u)\log q+t\frac{\nu_2(q-1)^2}4$ products in $\F_q$ with probability $1-\frac 1{2^u}$.

More generally, Kaltofen and Shoup \cite{KaSh}  show a generic probabilistic algorithm for taking $n-$th roots in a finite field.  This algorithms use with high probability, $O(n^{1.815}\log q)$ products in $\F_q$. Other algorithm can be found in \cite{DoSc}.

\subsection{Powers of a square matrix}

In our theorem, we have to find $A^l$ where $A$ is the companion matrix of $f(x)$, a $m\times m$ matrix. There is a lot of efficient algorithms for taking powers of a matrix as the well known Coppersmith-Winograd Algorithm, whose complexity is $O(m^{2.4}\log l)$.
 But, in the case of companion matrices, we may have a faster process. 
In \cite{Roy} is given an algorithm that generates new rows to the matrix $A$ and each row is obtained after $m$ multiplications and $m-1$ additions. 
Moreover, it is shown that we just have to calculate the first $l-1$ new arrows to find $A^l$. Thus we just need $(2m-1)(l-1)$ operations to find $A^l$.

\subsection{Characteristic Polynomial}

In our theorem, we have to calculate exactly one characteristic polynomial of a $m\times m$ matrix. The classical methods of  Krylov's and  Danilevsky  have complexity $O(m^3)$. Specifically,  $\frac 32m^2(m+1)$ and $(m-1)(m^2+m-1)$ multiplications and divisions respectively.

In conclusion, if $\langle f(x), n\rangle$ satisfies the reducible condition, where $m=\deg(f(x))$ and  using that $r<q^m$  and $l<n$, we obtain that the complexity of the algorithm to find every irreducible factors of $f(x^n) \in \F_q[x]$  has complexity bounded by
$$O(m^2\log^2 m\log\log m \log q,\,  n^{1.815}\log q,\,  mn,\,  m^3).$$

\end{document}